\newtheorem{theorem}{Theorem}[section]
\newtheorem{proposition}[theorem]{Proposition}
\newtheorem{corollary}[theorem]{Corollary}
\newtheorem{remark}[theorem]{Remark}
\newtheorem{lemma}[theorem]{Lemma}
\newtheorem{definition}[theorem]{Definition}
\newtheorem{example}[theorem]{Example}
\def\sym#1{\mathrm{Sym}(#1)}
\def\aut#1{\mathrm{Aut}(#1)}
\def\End#1{\mathrm{End}(#1)}
\def\aff#1{\mathrm{Aff}#1}
\def\S{S}
\def\RMlt{\mathrm{RMlt}}
\def\setof#1#2{\{#1\, : \,#2\}}
\newcommand*\xbar[1]{%
   \hbox{%
     \vbox{%
       \hrule height 0.5pt 
       \kern0.5ex
       \hbox{%
         \kern-0.1em
         \ensuremath{#1}%
         \kern-0.1em
       }%
     }%
   }%
}
\makeindex \setlength{\evensidemargin}{-0.04cm}
\begin{document}

\markboth{Authors' Names}{Instructions for Typesetting Manuscripts using \LaTeX}


\title{On the axioms of singquandles}

\author{M. Bonatto}

\address{Department of Mathematics and Computer Science, University of Ferrara, Via Macchiavelli 30, 44121 Ferrara, Italy\\ marco.bonatto.87@gmail.com}

\author{A Cattabriga}

\address{Department of Mathematics, University of Bologna, Piazza di Porta San Donato 5, 40126 Bologna, Italy\\ alessia.cattabriga@unibo.it}

\keywords{Singular knots, link invariants, quandles.}

\subjclass[2020]{Mathematics Subject Classification 2020: 20L05, 57K10}

\begin{abstract}
In this paper we deal with the notion of  singquandles introduced in \cite{SingInvol}. This is an algebraic structure that naturally axiomatizes   Reidemeister moves for singular links,  similarly to what happens for ordinary links and quandle structure.  We present a new axiomatization that  shows different algebraic aspects and simplifies applications.  We also reformulate and simplify  the axioms for affine singquandles (in particular in the idempotent case). 
\end{abstract}
\maketitle

\section{Introduction}

Singular knot theory was introduced in 1990 by Vassiliev \cite{Va} as an extension of classical knot theory allowing also immersions of $S^1\to S^3$ with  singularities; the aim was to get informations on knots  by studying the space of all their isotopy classes:  singular knots gave rise to  a decreasing filtration on the infinite dimensional vector space generated by isotopy classes of knots. Together with the introduction of this extension, the notion of finite type (or Vassiliev) invariants, as invariants vanishing on some step of this filtration, was introduced  producing  a new point of view on knot theory. Since then, many knot invariants, as well as different knot representations and techniques have been extended to singular knots and links (see  for example \cite{Bir,Fie}). Recently, in \cite{SingInvol}, a singular link invariant having the form of a  binary  algebraic structure and  called singquandle, was defined; as the name suggests, this structure extends to the singular case the quandle invariant for classical links. Quandles, or distributive grupoids, were introduced in the 1980s by Joyce \cite{J} and, independently, by Matveev \cite{Matveev}: the fundamental quandle $Q(L)$ of a link $L$, axiomatizes the Reidemeister moves and is a  classifying invariant for prime knots. Even if  comparing quandles is as difficult as comparing links, as for the case of Vassiliev theory,  the introduction of quandles (and racks) in knot theory paved the way for the construction of  new  invariants and tecniques.  Moreover, beside the interested of quandles for knot theory, these structures are relevant in many other areas, as  theoretical physics, for the study of the Yang-Baxter equation  (see \cite{AG, ESS, EGS}) or   abstract algebra itself (see \cite{Stanos, CP, Maltsev_paper}). In \cite{Generating_Reidemeister} and \cite{OrientedSing} the singquandle construction is done for the oriented  case, while  in  \cite{Psy}  the notion of psyquandles is introduced for the case of pseudoknots and singular knots and links as a generalization of biquandle structures for classical and virtual links \cite{biquandle}.\\

In this paper we deal with algebraic structures associated to singular links. More precisely,  we reformulate the definition of oriented and non-oriented singquandles, by using the language of binary operation: we  simplify the axioms introduced in the above mentioned papers and we  prove  the independence of our axioms.  With this new definition  we are able to  prove some algebraic properties of these structures and  to simplify some associated constructions as, for example, Alexander singquandles introduced in  \cite{SingInvol}.\\

Starting from our new reformulation  of the oriented singquandles $SQ(L)$ associated to a singular link $L$, we  remark that, if $\bar L_+$ (resp. $\bar L_-$) is the link obtained by replacing each singular crossing of $L$  with a  positive (resp. negative)  crossing, see the left (resp. right part)  of Figure \ref{fig:es2}, then $Q(\bar L_+)$ (resp.  $Q(\bar L_-)$) is a quotient of $SQ(L)$. This could be a starting point to explore the connection between $SQ(L)$ and $\{Q(\bar L_i)\}$, with $\{\bar L_i\}$, being  the set of all regular links obtained from $L$ by replacing a singular crossing with either a positive or negative crossing.

The definition of the (oriented) singquandle associated to a singular link is purely combinatorial. We wonder if there is also a topological construction for such an object as for the fundamental quandle for classical links \cite{J, Matveev}. Alternatively, if there is a topological construction of a proper quotient of the fundamental (oriented) sinquandle as for the  fundamental biquandle of a link \cite{Eva}.

In Section \ref{preliminary} we recall  all the algebraic notion that will be used in the rest of the paper, as well as the definition of classical and singular  links and their associated algebraic structures. In Section \ref{oriented} we analyze the oriented case, reformulating the definition of oriented singquandle, while the non-oriented case is studied in Section \ref{non-oriented}. We conclude the paper by analyzing the case of affine and Alexander singquandles.

In the paper we sometimes use the software \cite{Prover9} to generate examples and non examples of binary algebraic structures.

\section{Preliminary results}
\label{preliminary}

\subsection{Binary structures and right quasigroups}

A binary operation $\cdot$ on a set $X$ is a mapping 
$$\cdot:X\times X\longrightarrow X,\quad (x,y)\mapsto x\cdot y$$
and a binary algebraic structure is a set $X$ endowed with a set of binary operations. Let $(X,\cdot)$ be such a structure, the right multiplication by $x\in X$ is the map defined by setting $$R_x:y\mapsto y\cdot x$$
and the {\it squaring mapping} as
$$\sigma: X\longrightarrow X,\quad x\mapsto x\cdot x.$$
A (bijective) map $f:X\longrightarrow X$ is said to be an endomorphism (automorphism) of $(X,\cdot)$ if $f(x\cdot y)=f(x)\cdot f(y)$ for every $x,y\in X$. The group of automorphism of $(X,\cdot)$ is denoted by $\aut{X,\cdot}$.

The structure $(X,\cdot)$ is a {\it right quasigroup} if $R_x$ is a permutation for every $a\in A$. Clearly we can define the right division associated to $\cdot$ as $x/y=R_y^{-1}(x)$. Thus, for the scope of this paper, a right quasigroup can be alternatively be defined as a binary algebraic structure $(X,\cdot,/)$ such that
$$(x\cdot y)/y=x=(x/y)\cdot y.$$
Note that also $(X,/,\cdot)$ is a right quasigroup. 
A right quasigroup $(X,\cdot,/)$ is said to be:
\begin{itemize}
\item[(i)] {\it permutation} if $x\cdot y=x\cdot z$ holds;
\item[(ii)] {\it idempotent} if $x\cdot x=x$ holds;
\item[(iii)] {\it involutory} if $(x\cdot y)\cdot y=x$ holds;
\item[(iv)] {\it right distributive} if $(x\cdot y)\cdot z=(x\cdot z)\cdot(y\cdot z)$ holds;
\item[(v)] {\it $2$-divisible} if $\sigma$ is bijective.
\end{itemize}
Idempotent permutation right quasigroups satisfy the identity $x\cdot y=x\cdot x=x$ and they are called {\it projection}. Idempotent right distributive right quasigroups are called (right) {\it quandle}.

Given a right quasigroup $(X,\cdot)$, the orbits with respect to the natural action of the group $\RMlt(X,\cdot)=\langle R_x,\, x\in X\rangle$ are called the {\it connected components} of $(X,\cdot)$ and we say that $(X,\cdot)$ is {\it connected} if such group is transitive on $X$. Note that $X$ is the union of the connected components of its generators, therefore the following statement follows.

\begin{lemma}\label{connected by gen}
Let $(X,\cdot)$ be a right quasigroup generated by $S\subseteq X$. Then $(X,\cdot)$ is connected if and only if the element of $S$ are in the same connected component.
\end{lemma}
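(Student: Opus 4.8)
The plan is to prove the biconditional in Lemma~\ref{connected by gen} by exploiting the remark made just before the statement: that $X$ is the union of the connected components of its generators. Recall that $\RMlt(X,\cdot)=\langle R_x,\, x\in X\rangle$ acts on $X$, the connected components are the orbits of this action, and connectedness means this action is transitive. The generating hypothesis is that $S$ generates $(X,\cdot)$ as a right quasigroup, so every element of $X$ is obtained from elements of $S$ by finitely many applications of the operations $\cdot$ and $/$.

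For the forward direction, suppose $(X,\cdot)$ is connected. Then there is a single orbit under $\RMlt(X,\cdot)$, so trivially any two elements of $S$ lie in the same connected component, since every pair of elements of $X$ does. This direction is essentially immediate from the definitions.

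The substance is in the converse. Assume all elements of $S$ lie in a common connected component $C$, that is, a single orbit $O$ of $\RMlt(X,\cdot)$ with $S\subseteq O$; I want to conclude $O=X$, i.e. that the action is transitive. The key observation is that each orbit $O$ is closed under the operations $\cdot$ and $/$: if $x,y\in O$, then $x\cdot y=R_y(x)$ lies in the same orbit as $x$ since $R_y\in\RMlt(X,\cdot)$, and likewise $x/y=R_y^{-1}(x)$ lies in the orbit of $x$, using that $R_y^{-1}\in\RMlt(X,\cdot)$. Hence $O$ is a subquasigroup containing $S$. Since $S$ generates $X$, the smallest subquasigroup containing $S$ is all of $X$, forcing $O=X$, whence $\RMlt(X,\cdot)$ is transitive and $(X,\cdot)$ is connected.

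The main technical point to get right — and the step I expect to require the most care — is the verification that orbits are closed under both $\cdot$ and $/$; this is where the right quasigroup structure (the availability of $R_y^{-1}=R_{y}^{-1}\in\RMlt$ via the division $/$) is genuinely used, as opposed to a general groupoid where the argument would fail. One should note that an orbit is automatically closed under $R_y$ and $R_y^{-1}$ for generators $y\in X$, but here we need closure for all $y\in O$; this is fine because $R_y\in\RMlt(X,\cdot)$ for every $y\in X\supseteq O$. Once closure is established, the conclusion follows by the universal property of the generated subquasigroup, and the lemma is complete.
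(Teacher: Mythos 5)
Your proof is correct and follows essentially the same route as the paper, whose entire argument is the remark preceding the lemma that $X$ is the union of the connected components of its generators --- a fact made precise exactly by your observation that $R_y^{\pm 1}\in\RMlt(X,\cdot)$ preserves orbits, so that the orbit containing $S$ is closed under $\cdot$ and $/$ and hence, by generation, is all of $X$. Your packaging of this as ``orbits are subquasigroups, so invoke the universal property of $\langle S\rangle$'' is just a slightly more formal rendering of the same idea, not a different proof.
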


Let $(A,+)$ be an abelian group, $f\in \aut{A,+}$, $g\in \End{A,+}$ and $c\in A$. The right quasigroup $(A,\cdot)$ defined by setting
$$x\cdot y=f(x)+g(y)+c$$
is called {\it affine} right quasigroup over $A$. We denote such right quasigroup by $\aff{(A,f,g,c)}$.

In the paper we usually deal with algebraic structure with two binary operations denoted by $(X,\cdot,*)$. In the sequel, we denote the operation $\cdot$ just by juxtaposition and the right multiplication mappings by $x\in X$ respectively by $R_x:y\mapsto y\cdot x$ and by $\rho_x:y\mapsto y*x$.

\subsection{$\S$-right quasigroups}

Let us introduce a class of right quasigroups that will be relevant in the present paper in connection with coloring invariants of singular knots. The right quasigroups satisfying the identity
\begin{equation}
x/y=x(yx)\label{reduced axioms} \tag{S}
\end{equation}
will be called {\it $\S$-right quasigroups}. 

\begin{lemma}\label{PropS0}
Let $(X,\cdot)$ be a binary algebraic structure. The following are equivalent:
\begin{itemize}
 \item[(i)] $(X,\cdot)$ is a $\S$-right quasigroup.
\item[(ii)] The identity 
\begin{equation}\label{S01}
(yx)(x(yx))=y
\end{equation}
 holds.
\item[(iii)] The identity 
\begin{equation}\label{S02}
(x(yx))y=x
\end{equation}
 holds.
\end{itemize}
\end{lemma}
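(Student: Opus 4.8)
The plan is to translate the two displayed identities into statements about the right multiplications $R_x\colon y\mapsto y\cdot x$ and the auxiliary maps $T_x\colon z\mapsto z\cdot(x\cdot z)$. Writing $z=y\cdot x=R_x(y)$, identity \eqref{S01} reads $T_x(R_x(y))=y$, i.e. $T_x\circ R_x=\mathrm{id}$ for every $x$; writing $w=x\cdot(y\cdot x)=T_y(x)$, identity \eqref{S02} reads $R_y(T_y(x))=x$, i.e. $R_y\circ T_y=\mathrm{id}$ for every $y$. With this reading the statement becomes a comparison of one-sided versus two-sided inverses, and I would organize the proof as the cycle (i)$\Rightarrow$(ii)$\Rightarrow$(iii)$\Rightarrow$(i).

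For (i)$\Rightarrow$(ii) and (i)$\Rightarrow$(iii) I would just substitute the defining identity \eqref{reduced axioms} into the right quasigroup axioms $(x\cdot y)/y=x=(x/y)\cdot y$. Plugging $x/y=x(yx)$ into $(x/y)\cdot y=x$ gives $(x(yx))\cdot y=x$, which is \eqref{S02}; expanding $(x\cdot y)/y=(xy)\big(y(xy)\big)$ and using $(xy)/y=x$ gives $(xy)(y(xy))=x$, and exchanging the names of $x$ and $y$ yields \eqref{S01}.

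The crux, which I expect to be the main obstacle, is the equivalence (ii)$\Leftrightarrow$(iii): a single identity only exhibits a one-sided inverse for $R_x$, so it is not a priori clear that either one forces $R_x$ to be bijective. The device is to feed each identity into itself. Starting from \eqref{S01} and replacing $y$ by $x$ and $x$ by $y\cdot x$, one gets $\big(x(yx)\big)\cdot\big((yx)(x(yx))\big)=x$, whose second factor collapses to $y$ by a second application of \eqref{S01}; this leaves $(x(yx))\cdot y=x$, that is \eqref{S02}. The reverse implication is symmetric: starting from \eqref{S02} and replacing $x$ by $y$ and $y$ by $x\cdot(y\cdot x)$, the inner factor $(x(yx))\cdot y$ collapses to $x$ by a second application of \eqref{S02}, and one recovers $(yx)(x(yx))=y$, that is \eqref{S01}.

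Finally, once (ii) and (iii) are both available—which by the previous step happens as soon as either one holds—the translation of the first paragraph gives $T_x\circ R_x=\mathrm{id}=R_x\circ T_x$ for every $x$, so each $R_x$ is a bijection with $R_x^{-1}=T_x$. Hence $(X,\cdot)$ is a right quasigroup, and its right division satisfies $x/y=R_y^{-1}(x)=T_y(x)=x\cdot(y\cdot x)$, which is exactly \eqref{reduced axioms}. This proves (iii)$\Rightarrow$(i) and closes the cycle.
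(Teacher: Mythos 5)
Your proposal is correct and follows essentially the same route as the paper: the same self-substitution tricks establish \eqref{S01}~$\Leftrightarrow$~\eqref{S02}, and your observation that $T_x\circ R_x=\mathrm{id}=R_x\circ T_x$ forces $R_x$ to be bijective with $x/y=x(yx)$ is exactly the paper's final step, where your map $T_x$ appears as the auxiliary operation $x\bullet y=x(yx)$ shown to be the right division. The only differences are cosmetic (map notation instead of a derived operation, and deriving both identities from (i) rather than just \eqref{S01}).
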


\begin{proof}
(i) $\Rightarrow$ (ii) We have
$$y=(yx)/x\overset{\eqref{reduced axioms}}{=}(yx)(x(yx)).$$

(ii) $\Rightarrow$ (iii) Using the identity \eqref{S01} twice, we have
$$(x(yx))y=(x(yx))((yx)(x(yx)))=x.$$

(iii) $\Rightarrow$ (ii) Using the identity \eqref{S02} twice, we have
$$(yx)(x(yx))=(y((x(yx))y))(x(yx))=y.$$

(ii), (iii) $\Rightarrow$ (i) Let us denote $x\bullet y=x(yx)$. The identities \eqref{S01} and \eqref{S02} reads
$$(xy)\bullet y=(x\bullet y)y=x,$$
therefore $\bullet$ is the right division with respect to $\cdot$.
\end{proof}

It is easy to prove that the identity \eqref{reduced axioms} is equivalent to
\begin{align}
xy&=x/(y/x)\label{reduced axioms2}\tag{S'}.
\end{align}
Indeed, it is enough to replace $y$ by $y/x$ in order to get the identity \eqref{reduced axioms2} from \eqref{reduced axioms} and replace $y$ by $yx$ conversely. 
\begin{proposition}\label{S and S'}
Let $(X,\cdot,/)$ be a right quasigroup. The following are equivalent:
\begin{itemize}
\item[(i)] The map $$\varphi:X\times X\longrightarrow X\times X,\quad (x,y)\mapsto(xy,y/x)$$
 is an involution.
\item[(ii)] $(X,\cdot, /)$ is a $S$-right quasigroup.
\item[(iii)] $(X, /,\cdot)$ is a $S$-right quasigroup.
\end{itemize}
\end{proposition}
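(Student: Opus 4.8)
The plan is to unwind condition (i) into a pair of identities, to match these with the $\S$-right quasigroup axiom through the equivalent forms in Lemma \ref{PropS0}, and then to read off the equivalence (ii) $\Leftrightarrow$ (iii) from the already-established equivalence between \eqref{reduced axioms} and \eqref{reduced axioms2}.

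First I would compute $\varphi^2$. Setting $x'=xy$ and $y'=y/x$, we have $\varphi(x',y')=(x'y',\,y'/x')$, so $\varphi$ is an involution exactly when
$$(xy)(y/x)=x \qquad\text{and}\qquad (y/x)/(xy)=y$$
hold for all $x,y\in X$; I will call these (A) and (B). The decisive step is the substitution $u=y/x$: since $(X,\cdot,/)$ is a right quasigroup, $R_x$ is a bijection, so for fixed $x$ the element $u=R_x^{-1}(y)$ runs over all of $X$ as $y$ does, and $y=ux$. Then $xy=x(ux)$, so (A) turns into $(x(ux))u=x$, which is precisely \eqref{S02}. Hence (A) is equivalent to \eqref{S02}, and by Lemma \ref{PropS0} to (ii).

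This already yields (i) $\Rightarrow$ (ii), since an involution forces (A) via its first coordinate. For (ii) $\Rightarrow$ (i) I would use that \eqref{reduced axioms} allows every right division to be rewritten as $a/b=a(ba)$; applied to (B) with $a=y/x$ and $b=xy$ this gives $(y/x)/(xy)=(y/x)\bigl((xy)(y/x)\bigr)$, and since (ii) also delivers (A) the inner factor collapses to $x$, leaving $(y/x)\cdot x=y$ by the right quasigroup law. Thus (B) holds too and $\varphi$ is an involution, giving (i) $\Leftrightarrow$ (ii).

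For (ii) $\Leftrightarrow$ (iii) I would simply note that the axiom \eqref{reduced axioms} written for the right quasigroup $(X,/,\cdot)$, where the roles of $\cdot$ and $/$ are exchanged, reads $xy=x/(y/x)$, i.e. exactly \eqref{reduced axioms2}; as \eqref{reduced axioms} and \eqref{reduced axioms2} are equivalent, $(X,\cdot,/)$ and $(X,/,\cdot)$ are $\S$-right quasigroups simultaneously. The only genuine computation is the unwinding of $\varphi^2$ together with the substitution $u=y/x$; I expect no real obstacle beyond bookkeeping — chiefly keeping track of which identity originates from which coordinate of $\varphi^2$, and confirming that the reparametrization $y\mapsto y/x$ is bijective so that the universal quantifiers are faithfully preserved.
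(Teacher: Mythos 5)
Your proposal is correct and takes essentially the same route as the paper: both unwind $\varphi^2$ into the two coordinate identities $(xy)(y/x)=x$ and $(y/x)/(xy)=y$, identify them with \eqref{reduced axioms} and \eqref{reduced axioms2} (you via the bijective substitution $y\mapsto y/x$ and Lemma \ref{PropS0}, the paper asserting this directly), and settle (ii)$\Leftrightarrow$(iii) by the remark that \eqref{reduced axioms} and \eqref{reduced axioms2} are equivalent. Your only, harmless, deviation is deriving the second identity from the first together with \eqref{reduced axioms} instead of noting it is equivalent to \eqref{reduced axioms2} on its own, which merely fills in the paper's terse ``namely''.
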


\begin{proof}
The items (ii) and (iii) are equivalent because of the previous remark.

(i) $\Leftrightarrow$ (ii), (iii) Since
$$\varphi^2(x,y)=\varphi(xy,y/x)=((xy)(y/x),(y/x)/(xy))$$
then $\varphi$ is an involution if and only if
\begin{align*}
(xy)(y/x)=x,\qquad (y/x)/(xy)=y,
\end{align*}
hold. Namely \eqref{reduced axioms} and \eqref{reduced axioms2} hold.
\end{proof}

Let $t=s$ be an identity that follows from \eqref{reduced axioms}. According to Proposition \ref{S and S'}, then also the identity $t'=s'$ where $\cdot$ and $/$ are interchanged follows. For instance the identity $x=(x(y((x(yx))y)))y$ is a consequence of \eqref{reduced axioms} and so also $x=(x/(y/((x/(y/x))/y)))/y$ holds for $\S$-right quasigroups.
 
Let us show some examples of $\S$-right quasigroups.
\begin{example}\label{Example of S}

\begin{itemize}

\item[(i)] Let $(X,\cdot)$ be an involutory right quasigroup. Then $(X,\cdot)$ is a $S$-right quasigroup if and only if 
\begin{align}\label{S for involutory}
x(yx)=xy
\end{align}
 holds. Thus, if $R_x=R_y$ whenever $x$ and $y$ are in the same connected component of $(X,\cdot)$ the identity \eqref{S for involutory} is satisfied. For instance, involutory permutation right quasigroups and $2$-reductive involutory (right) quandles have such property (see \cite{Medial} for the construction of such quandles).

\item[(ii)] Let $X=\aff(A,f,g,c)$. Then $X$ is an $S$-right quasigroup if and only if

\begin{equation} fg^2+f^2-1 =g+fgf=(f+fg+1)(c)=0.\label{affine S}
\end{equation}

Then $(A,+)$ has a $\mathbb{Z}[t,t^{-1},u]/(tut+u,t^2+tu^2-1)$-module structure. On the other hand, given a module $M$ over such ring, $(M,\cdot)$ where
$$x\cdot y =t x+u y+c$$
 is a $S$-right quasigroup if and only if $(1+t+tu)c=0$ (e.g. $c=0$).
 
\item[(iii)] A quandle $(Q,\cdot)$ is a $S$-right quasigroup if and only if $$x=((xy)x)y$$ 
 holds. Let $G$ be a group and $Q$ be the conjugation quande associated to $G$. Then $Q$ is a $S$-right quasigroup if and only if $$[x,y^2]=1$$ 
 holds, i.e. $\setof{y^2}{y\in G}\subseteq Z(G)$ ($G$ is also said to be $2$-central). 
 
 \item[(iv)] A group $(G,\cdot)$ is a right quasigroup and $x/y=xy^{-1}$. Therefore, $(G,\cdot)$ is a $\S$-right quasigroup if and only if
 $$xy^{-1}=x(yx)\, \Leftrightarrow \, x=y^{-2}$$
holds. Such an identity is satisfied only by the trivial group.
\end{itemize}
\end{example} 

\subsection{Regular and Singular links, quandles and singquandles}

An oriented  link in $S^3$ is an embedding $S^1\vee S^1\cdots \vee S^1$ of $\mu$ disjoint copies of $S^1$ in $ S^3$, together with the choice of an orientation on every connected component. Each oriented link can be represented by means of a regular diagram on a plane, that is a plane quadrivalent directed graph having vertices decorated with overcrossing and undercrossing (see Figure \ref{fig:crossing}). 
\begin{figure}[ht]
\begin{center}
\includegraphics[width=6cm]{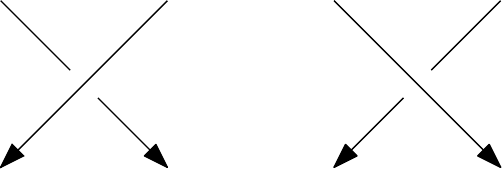}
\caption{The two possible decoration of a vertex in a digram of a link.}
\label{fig:crossing}
\end{center}
\end{figure}
As oriented links in $S^3$ are considered up to isotopy, their diagrams are considered up to planar isotopy (preserving decorations) and the classical  Reidemeister moves depicted  in Figure \ref{fig:reidemeister}.

\begin{figure}[ht]
\begin{center}
\includegraphics[width=8cm]{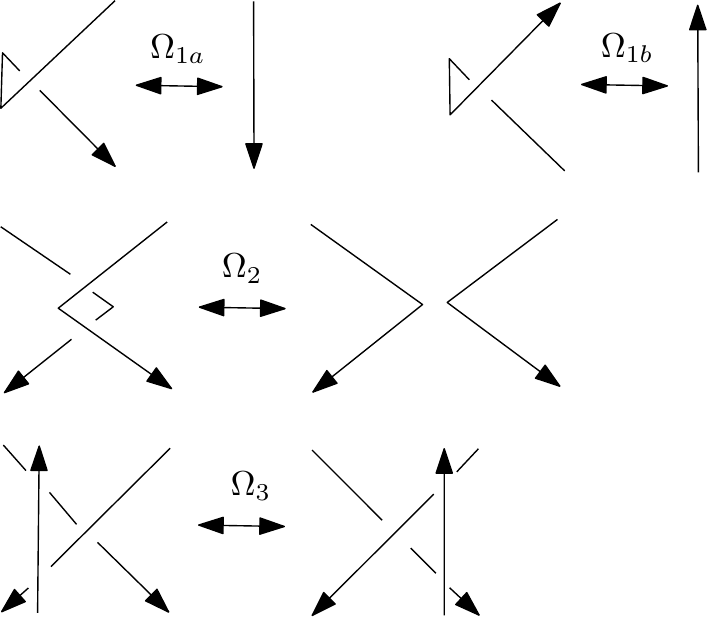}
\caption{The classical Reidemeister moves.}
\label{fig:reidemeister}
\end{center}
\end{figure}

An usual  way to construct link invariants is to define them on diagrams in a way that ensure invariance under Reidemeister moves. With respect to this approach,  algebraic invariant are provided by quandles. Indeed, the  axiom satisfied by a quandle structure, idempotency, right-quasigroup property, right-distributivity, correspond exactly to the  Reidemeister moves. More precisely given an non-empty set $Q$ and a binary operation $\ast$, suppose to decorate each arc of a link diagram by an element of $Q$ as  depicted in Figure \ref{fig:crossing_quandle}: requiring the invariance under the Reidemeister moves, correspond to imposing  conditions giving a quandle structure to the binary operation. So, to each oriented link $L$ we can associate a quandle $Q(L)$, called  \textit{fundamental quandle} of $L$, by taking the  quotient of the free quandle generated by the arcs  of a diagram of $L$ modulo the crossing relations represented in Figure \ref{fig:crossing_quandle}. A coloring of a link by a quandle $T$ is  a quandle homomorphism $f\colon Q(L)\to T$ or equivalently a decoration of the edges of a link diagram with elements of $T$ such that the crossing relations in Figure \ref{fig:crossing_quandle} are satisfied; so a quandle could color a link $L$  if and only if it is a quotient of the fundamental quandle of $L$.

\begin{remark}\label{remark on coloring} Given a coloring of a link $L$ by a quandle $T$,  the colors (i.e. the elements of $T$) that decorate the edges within the same connected component of $L$ are also within the same connected component of $T$. Moreover, for a knot any pair of colors at an arbitrary crossing completely determines the whole coloration. So, all quandles that color a knot diagram are 2-generated and connected according to Lemma \ref{connected by gen}.
\end{remark}

In \cite{Fenn},  the fundamental quandle was introduced in a  topological way using paths in the link complement. Using this approach it is easy to see to see that the fundamental quandle in fact depends on the orientation of the couple $(S^3,L)$: if both of them are changed the quandle structure does not change, while if only one of them is changed the quandle $(Q(L),\cdot,/)$ and $(Q(L'),/,\cdot)$ are isomorphic where $Q(L')$ is the fundamental quandle associated to the pair with just one orientation reversed.

\begin{figure}[ht]
\begin{center}
\includegraphics[width=6cm]{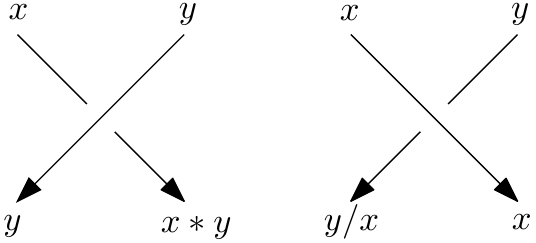}
\caption{The coloring of crossing in the fundamental quandle.}
\label{fig:crossing_quandle}
\end{center}
\end{figure}

The above way of reasoning could be generalized to the case of oriented singular links. An oriented singular link is the immersion  $S^1\vee S^1\cdots \vee S^1$ of $\mu$ disjoint copies of $S^1$ in $ S^3$, together with the choice of an orientation on every connected component.  Taking a  combinatorial point of view, an equivalent definition of a  singular link is as an equivalence class of plane quadrivalent directed graph having some vertices decorated with overcrossing and undercrossing, modulo the equivalence relation  generated by  planar isotopy (preserving decorations) and the  Reidemeister moves depicted  in Figure \ref{fig:reidemeister} and in Figure \ref{fig:reidemeister2} (see \cite{Generating_Reidemeister}). 

\begin{figure}[ht]
\begin{center}
\includegraphics[width=14cm]{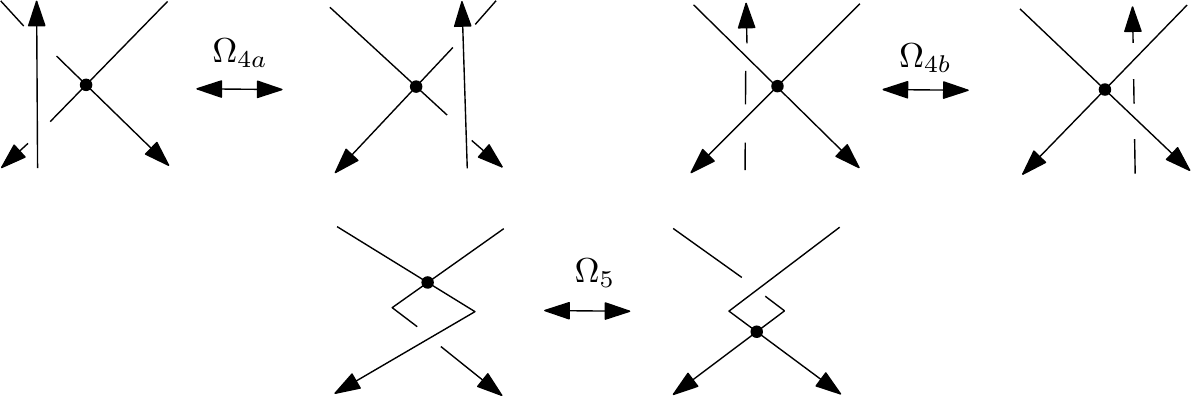}
\caption{The  Reidemeister moves for singular crossings.}
\label{fig:reidemeister2}
\end{center}
\end{figure}

\begin{figure}[ht]
\begin{center}
\includegraphics[width=10cm]{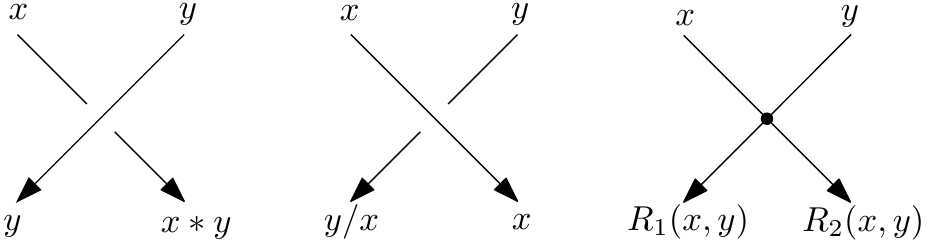}
\caption{The coloring of crossing of a singular diagram.}
\label{fig:crossing_sing_quandle}
\end{center}
\end{figure}

The decorated crossings are called regular crossing, and the others are called singular crossing. The set of moves displayed in Figures \ref{fig:reidemeister} and \ref{fig:reidemeister2} are called  singular Reidemeister moves. In \cite{Generating_Reidemeister} an algebraic structure having three binary operations $(\ast, R_1,R_2)$, associated to a singular link and generalizing  the quandle one, is constructed. It is called  the fundamental oriented singquandle and it is defined as follows.

Suppose to color the arcs of the diagram of a singular links as in Figure \ref{fig:crossing_sing_quandle}; as before imposing the invariance with respect to the classical Reidemeister moves implies that  the operation $*$ is a quandle, while the  invariance under the other generalized Reidemeister moves, imposes some further axioms (see Figure \ref{fig:esempio_reidemeister} for $\Omega_5$). After a suitable change of variables we can rewrite them as 
\begin{align}
R_1(x,y)*z &=R_1(x*z,y*z) \label{O1}	\tag{OS1}\\
R_2(x,y)*z &=R_2(x*z,y*z) \label{O2}\tag{OS2}\\
(y*x)*z &=(y*R_1(x,z))*R_2(x,z) \label{O3}\tag{OS3}\\
R_1(x,y)*R_2(x,y) &=R_2(y,x*y) \label{O5}\tag{OS4}.\\
R_2(x,y) &=R_1(y,x*y) \label{O4}\tag{OS5}
\end{align}

\begin{figure}[ht]
\begin{center}
\includegraphics[scale=1.1]{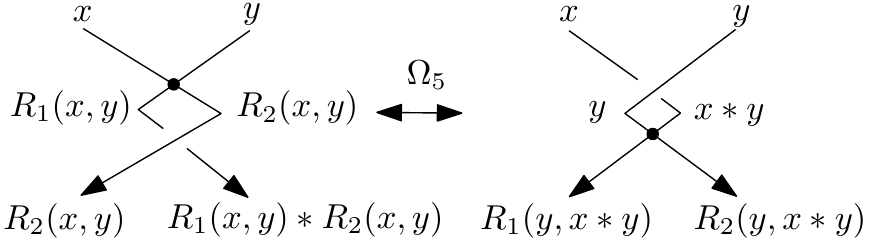}
\caption{The axioms associated to $\Omega_5$.}
\label{fig:esempio_reidemeister}
\end{center}
\end{figure}

So we get the following definition.

\begin{definition}\label{Oriented def}\cite{Generating_Reidemeister}
An {\it oriented singquandle} is a triple $(X,*,R_1,R_2)$ where $(X,*)$ is a (right) quandle and
$$R_1, \, R_2:X\times X\longrightarrow X$$
such that \eqref{O1}, \eqref{O2}, \eqref{O3}, \eqref{O5}, \eqref{O4} hold.

\end{definition}

In \cite{OrientedSing}, the fundamental oriented singquandle $SQ(L)$ associated to a singular link $L$ is defined as the quotient of the free singquandle  generated by the arcs  of any diagram of $L$ modulo the crossing relations represented in Figure \ref{fig:crossing_sing_quandle}. As for classical knots colorings of a link $L$ by an oriented singquandle $T$ correspond to morphisms from $SQ(L)$ onto $T$.  

Let us see what is going to happen if we forget about orientation: the quandle structure will be involutive and the other two binary operations will have to respect a rotational simmetry. More precisely, following \cite{SingInvol}, we get the following definition.

\begin{definition}\cite{SingInvol}\label{def singquandles}
 Let ($X,*)$ be an involutive quandle and let $R_1,R_2:X\times X\longrightarrow X$. The triple $(X,*,R_1,R_2)$ is called a singquandle if the following axioms hold:
\begin{align} 
 x &= R_1(y,R_2(x,y)) = R_2(R_2(x, y),R_1(x, y))\label{D1} \tag{S1a}\\
 y &= R_2(R_1(x, y), x) = R_1(R_2(x, y),R_1(x, y))\label{D2}\tag{S1b}\\
 (R_1(x, y),R_2(x, y)) &= (R_2(y,R_2(x, y)),R_1(R_1(x, y), x))\label{def of R2}\tag{S1c}\\
(y * z) * R_2(x, z) &= (y * x)* R_1(x, z) \tag{S2}\\
R_1(x, y) &= R_2(y * x,x)\tag{S3}\\
 R_2(x, y) &= R_1(y * x, x) * R_2(y * x, x) \label{TO CHECK}\tag{S4}\\
 R_1(x * y, z) * y &= R_1(x, z * y) \tag{S5}\\
 R_2(x * y, z) &= R_2(x, z * y) * y \tag{S6}
\end{align}
\end{definition}

Notice that the axioms (\ref{D1}), (\ref{D2}), (\ref{def of R2}) are those corresponding to  a rotational simmetry of $\pi/2$, $\pi$ and $3/2\pi$ of  the coloration of a singular crossing (see Figure \ref{fig:fare});  hence it is enough to set the symmetry with respect to a rotation of $\pi/2$ degree and so set only the axiom 
\begin{align}
\left(R_2(x,y), y\right)=\left(R_1(R_1(x,y),x),R_2(R_1(x,y),x)\right)\label{rotational}\tag{S1}.
\end{align}

\begin{figure}[ht]
\begin{center}
\includegraphics[width=16cm]{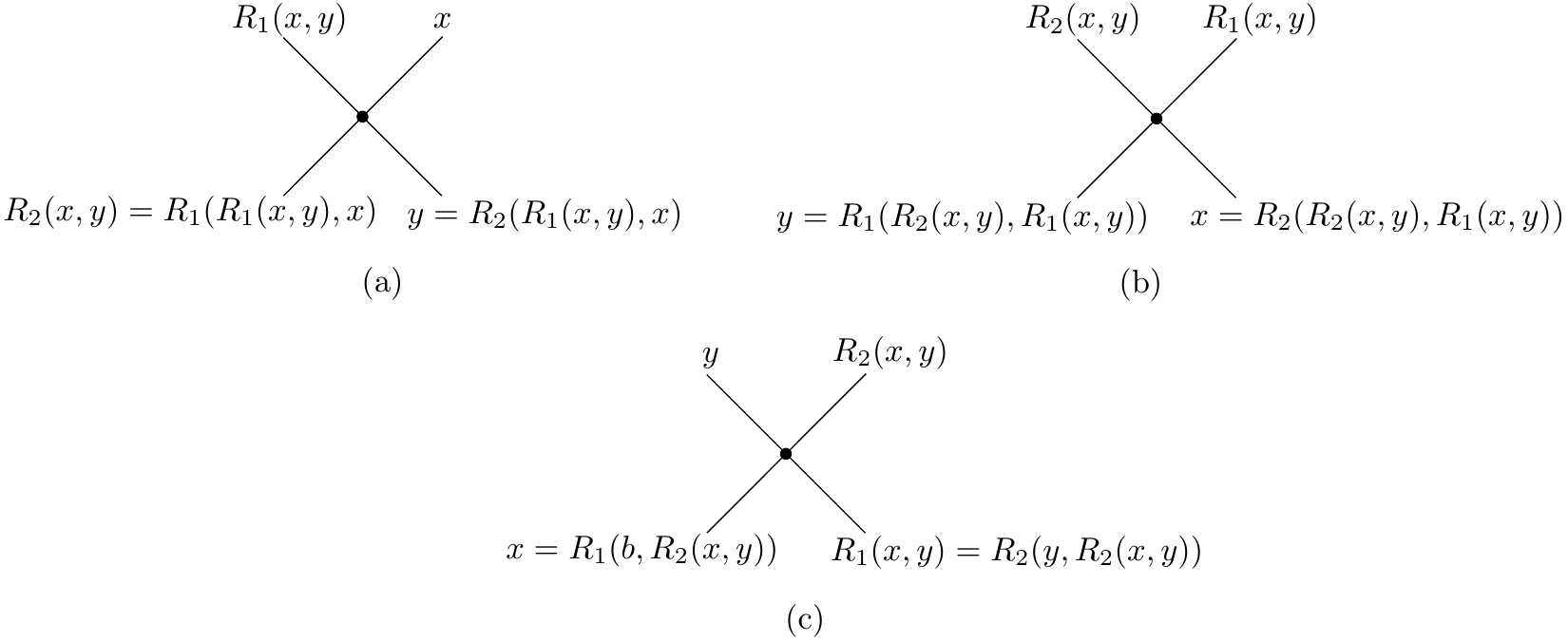}
\caption{Rotational simmetry and corresponding singquandle axioms.}
\label{fig:fare}
\end{center}
\end{figure}

The fundamental singquandle associated to a (non-oriented) singular link is defined analogously to the definition of the oriented singquandle.

\section{Oriented singquandles}

\label{oriented}

If we set $y x=R_1(x,y)$ and we take \eqref{O4} as the definition of $R_2$ by $\cdot$ and $*$ as $R_2(x,y)=(x*y)y$,  we can rewrite  Definition \ref{Oriented def} as follows. 

\begin{definition}\label{Oriented def2}
An {\it oriented singquandle} is a binary algebraic structure $(X,\cdot,*)$ where $(X,*)$ is a (right) quandle and
such that
\begin{align}
(yx)*z &= (y*z)(x*z) \label{O21}\tag{OS1'}	\\
(x(yx))*z &=(x*z)((y*z)(x*z)) \label{O22}\tag{OS2'}\\
(y*x)*z &=(y*(zx))*(x(zx)) \label{O23}\tag{OS3'}\\
(yx)*((x*y)y) &=(y*(x*y))(x*y) \label{O25}\tag{OS4'}
%
\end{align}
\end{definition}

It is clear that \eqref{O21} implies \eqref{O22}, so we can omit \eqref{O22} from the definition of oriented singquandles.

\begin{proposition}\label{Car Oriented}
Let $(X,\cdot,*)$ be a binary algebraic structure. The following are equivalent:
\begin{itemize}
\item[(i)] $(X,\cdot,*)$ is an oriented singquandle.
\item[(ii)] $(X,*)$ is a quandle, $\rho_x\in \aut{X,\cdot}$ and
\begin{align}\label{prop of rho}
\rho_x \rho_y	&=\rho_{(y*x)x}\rho_{xy}.
\end{align}
for every $x,y\in X$.
\item[(iii)] $(X,*)$ is a quandle and the following identities hold:
\begin{align}
(xy)*z&=(x*z)(y*z),\label{Oriented1}\\
(z*y)*x &=(z*(xy))*((y*x)x)\label{Oriented2}.
\end{align}

\end{itemize} 
\end{proposition}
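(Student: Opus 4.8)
The plan is to recast every condition as a statement about the right translations $\rho_x\colon y\mapsto y*x$ of the quandle $(X,*)$, exploiting that these maps are bijections of $X$. I would first dispose of the ``distributive'' axiom: condition \eqref{O21} reads $\rho_z(y\cdot x)=\rho_z(y)\cdot\rho_z(x)$, i.e.\ each $\rho_z$ is an endomorphism of $(X,\cdot)$; since $(X,*)$ is a quandle each $\rho_z$ is already bijective, so \eqref{O21} is equivalent to $\rho_x\in\aut{X,\cdot}$ for every $x$, which is at once the second clause of (ii) and, written out, the identity \eqref{Oriented1}. (This also absorbs \eqref{O22}, exactly as observed after Definition \ref{Oriented def2}.) Thus the whole statement reduces to showing that, in the presence of \eqref{O21}, the remaining axioms \eqref{O23} and \eqref{O25} together amount to the single relation \eqref{prop of rho}.

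I would handle (ii)$\Leftrightarrow$(iii) as a pure translation under the convention $\rho_x\rho_y=\rho_x\circ\rho_y$: evaluating \eqref{prop of rho} at an arbitrary $z\in X$ gives $\rho_x\rho_y(z)=(z*y)*x$ on the left and $\rho_{(y*x)x}\rho_{xy}(z)=(z*(xy))*((y*x)x)$ on the right, which is precisely \eqref{Oriented2}; conversely \eqref{Oriented2}, holding for all $z$, is the assertion that these two composites agree as maps. For (i)$\Rightarrow$(ii) I would rewrite \eqref{O23} in the same language: reading both sides as functions of the base variable, it says $\rho_z\rho_x=\rho_{(x*z)z}\,\rho_{zx}$, and relabelling $z\mapsto x$, $x\mapsto y$ turns this into exactly \eqref{prop of rho}. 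Hence modulo \eqref{O21} the axiom \eqref{O23} $\emph{is}$ \eqref{prop of rho}, and the easy directions (i)$\Rightarrow$(ii),(iii) amount to discarding \eqref{O25}; all the substance lives in the converse, namely recovering \eqref{O25}.

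The main obstacle is therefore to prove that \eqref{O25} is redundant, i.e.\ that it follows from \eqref{O21} and \eqref{prop of rho}. The route I expect to take is the following. Writing the left-hand side of \eqref{O25} as $\rho_{(x*y)y}(y\cdot x)$, I would use the instance of \eqref{prop of rho} with $x$ and $y$ interchanged, $\rho_y\rho_x=\rho_{(x*y)y}\rho_{yx}$, to substitute $\rho_{(x*y)y}=\rho_y\rho_x\rho_{yx}^{-1}$. The decisive simplification is idempotency of the quandle: $\rho_{yx}$ fixes $yx$ since $(yx)*(yx)=yx$, so $\rho_{yx}^{-1}(yx)=yx$ and the left-hand side collapses to $\rho_y\rho_x(yx)=((yx)*x)*y$. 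I would then unwind this using \eqref{O21} and the quandle laws: $(yx)*x=(y*x)\cdot(x*x)=(y*x)\cdot x$ by idempotency, whence $((yx)*x)*y=((y*x)*y)\cdot(x*y)$, and right distributivity with idempotency gives $(y*x)*y=(y*y)*(x*y)=y*(x*y)$, which is exactly the right-hand side $(y*(x*y))(x*y)$ of \eqref{O25}.

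The delicate points I anticipate are keeping the composition order in \eqref{prop of rho} consistent throughout, and invoking the invertibility of $\rho_{yx}$ only through the quandle structure; once the chain above is in place, reading each step backwards supplies (ii)$\Rightarrow$(i), and combined with the translation of the second and third paragraphs this closes the cycle (i)$\Rightarrow$(ii)$\Leftrightarrow$(iii)$\Rightarrow$(i).
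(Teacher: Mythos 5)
Your proposal is correct and follows essentially the same route as the paper's proof: you translate \eqref{O21} into $\rho_x\in\aut{X,\cdot}$, identify \eqref{O23} with \eqref{prop of rho} (equivalently \eqref{Oriented2}, reading $R_2(x,z)=(x*z)z$ as the paper's own proof does), and derive \eqref{O25} from the swapped instance $\rho_{(x*y)y}=\rho_y\rho_x\rho_{yx}^{-1}$ together with the idempotency fixed point $\rho_{yx}(yx)=yx$, exactly as in the paper. The only cosmetic difference is that you verify the resulting identity $\rho_y\rho_x(yx)=(y*(x*y))(x*y)$ by direct expansion via \eqref{O21} and right distributivity, whereas the paper routes the right-hand side through the conjugation formulas $R_{x*y}=\rho_y R_x\rho_y^{-1}$ and $\rho_{x*y}=\rho_y\rho_x\rho_y^{-1}$ and the commutation of $R_x$ with $\rho_x$ --- the same computation in different bookkeeping.
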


\begin{proof}

The equivalence between (ii) and (iii) is straightforward. Let us show the equivalence between (i) and (ii).

\begin{itemize}
\item The identity \eqref{O21} is equivalent to
have that $\rho_z\in \aut{X,\cdot}$ for every $z\in X$. In particular, $R_z$ and $\rho_z$ commute: indeed $\rho_z R_z(x)=(xz)*z=(x*z)(z*z)=(x*z)z=R_z\rho_z(x)$ for every $x\in X$.


\item The identity \eqref{O23} is equivalent to
\begin{equation}\label{e1}
\rho_x \rho_y	=\rho_{(y*x)x}\rho_{xy}.
\end{equation}

\item  Using that $\rho_x\in \aut{(X,\cdot)}$ and $\rho_{x*y}=\rho_y \rho_x\rho_y^{-1}$, the identity \eqref{O25} can be written as

$$\rho_{(x*y)y}R_x(y)=R_{x*y}\rho_{x*y}(y)=\rho_y R_x \rho_y^{-1} \rho_y \rho_x \rho_y^{-1}(y)=\rho_y R_x\rho_x(y).$$
Then using \eqref{e1} we have $\rho_{(x*y)y}=\rho_y \rho_x\rho_{yx}^{-1}$ and so

$$\rho_{(x*y)y}R_x(y)=\rho_y \rho_x\rho_{yx}^{-1}(yx)=\rho_y \rho_x R_x(y)= \rho_y R_x\rho_x(y)$$
holds since $\rho_x$ and $R_x$ commute. 
\end{itemize}
\end{proof}

Let $(X,\cdot,*)$ be a binary algebraic structure. 
\begin{itemize}
\item[(i)] If $(X,*)$ is projection then $(X,\cdot,*)$ is an oriented singquandle (the correponding maps are $R_1(x,y)=yx$ and $R_2(x,y)=xy$). Therefore, given any binary structure $(X,\cdot)$ we can color the diagram of a singular knot as  as in Figure \ref{fig:es1}.

\begin{figure}[ht]
\begin{center}
\includegraphics[width=10cm]{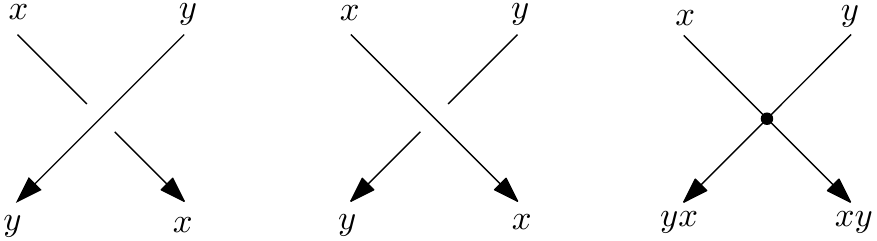}
\caption{}
\label{fig:es1}
\end{center}
\end{figure}

\item[(ii)] If $(X,\cdot)$ is a projection right quasigroup then $\rho_x\in \aut{X,\cdot}=\sym{X}$ for every $x\in X$ and \eqref{prop of rho} turns out to be right distributivity for $(X,*)$ (the correponding maps are $R_1(x,y)=y$ and $R_2(x,y)=x*y$). Hence $(X,\cdot,*)$ is an oriented singquandle for any quandle $(X,*)$ and we can color the diagram of a singular knot as in Figure \ref{fig:es2}.

\begin{figure}[ht]
\begin{center}
\includegraphics[width=10cm]{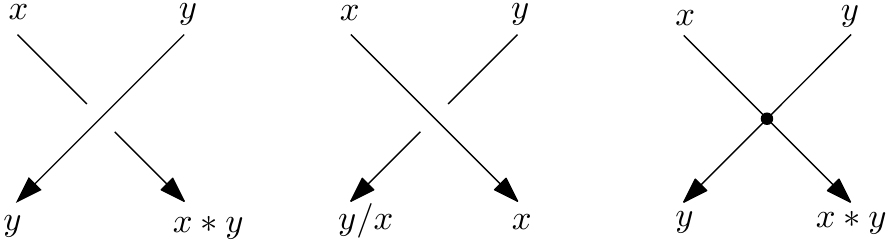}
\caption{}
\label{fig:es2}
\end{center}
\end{figure}

Moreover, if $\bar L_+$ is the link obtained by replacing each singular crossing with the regular crossing on the left  of Figure \ref{fig:es2}, then $Q(\bar L_+)$ is a quotient of $SQ(L)$. Indeed the link $L$ is colorable by the oriented singquandle $X$ generated by the arcs modulo the crossing relations as in \eqref{fig:es2} that is isomorphic to $Q(\bar{L}_+)$ (the $\cdot$ operation in $X$ is trivial and the generators of $X$ satisfy the very same relations satisfied by the generators of $Q(\bar{L}_+)$). Therefore we have the canonical morphism 
$$SQ(L)\longrightarrow X\cong Q(\bar{L}_+)$$
that identifies the canonical generators.

\item[(iii)] Let $(X,*,/)$ be a quandle. Then $(X,/,*)$ is an oriented singquandle. Indeed clearly $\rho_x\in \aut{X,/}=\aut{X,*}$ for every $x\in X$ and
\begin{align*}
(y*(z/x))*((x*z)/z)&=(y*(z/x))*x\\
&=(y*x)*((z/x)*x)\\
&=(y*x)*z, 
\end{align*}
i.e. \eqref{Oriented2} holds. In this case the crossing relations look like in Figure \ref{fig:es3}.
\begin{figure}[ht]
\begin{center}
\includegraphics[width=10cm]{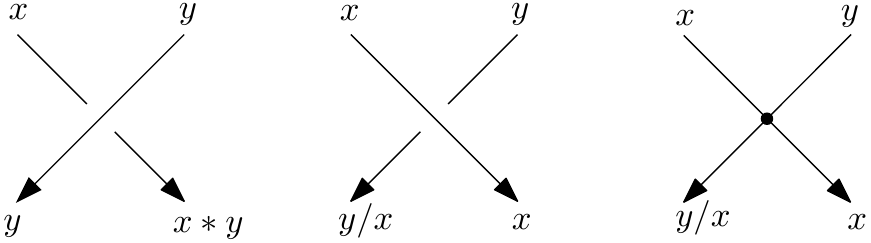}
\caption{}
\label{fig:es3}
\end{center}
\end{figure}

Similarly to the previous case, if $\bar L_-$ is the link obtained by replacing each singular crossing with the regular crossing on the right of Figure \ref{fig:es2}, then $Q(\bar L_-)$ is a quotient of $SQ(L)$.

\end{itemize}
The axioms in Proposition \ref{Car Oriented}(iii) are independent, as we can see from the following computer generated examples, computed by using Mace 4 \cite{Prover9}. 
\begin{itemize}
\item The binary algebraic structure $(X,\cdot,*)$ where
$$(X,\cdot)=\begin{tabular}{|c c c|}
\hline
1&1&1\\
1&2&3\\
3& 3& 3\\
\hline

\end{tabular}\,,\qquad (X,*)=\begin{tabular}{|c c c |}
\hline
1&3&1\\
2&2&2\\
3&1&3\\
\hline
\end{tabular}\,,
$$
satisfies \eqref{Oriented1} but not \eqref{Oriented2}.
\item The binary algebraic structure $(X,\cdot,*)$ where
$$(X,\cdot)=\begin{tabular}{|c c c|}
\hline
2&1&1\\
2&1&1\\
2&1&1\\
\hline
\end{tabular}\,,\qquad (X,*)=\begin{tabular}{|c c c|}
\hline
1&1&1\\
3&2&2\\
2&3&3\\
\hline
\end{tabular}\,,
$$
satisfies \eqref{Oriented2} but not \eqref{Oriented1}.
\end{itemize}

\section{Singquandles}
\label{non-oriented}

Let $(X,*,R_1,R_2)$ be a singquandle and let us define $y x=R_1(x,y)$ and according to \eqref{rotational}, $R_2(x,y)=R_1(R_1(x, y), x)=x(yx)$ (see Figure \ref{sing coloring}). Let us show Definition \ref{def singquandles} in term of identities satisfied by $\cdot$ and $*$.
\begin{align} 
y &=(yx)(x(yx))\tag{S1'}	\label{eq to S}\\
(y * z) * (x(zx)) &= (y * x)*(zx)\label{prop of rho2}\tag{S2'} \\
yx &= (y * x)(x(y*x))\label{def of *}\tag{S3'}\\
 x(yx) &= (x(y * x)) * ((y * x)( x(y*x))\label{last one}\tag{S4'}\\
(z(x * y)) * y &= ( z * y)x \label{auto}\tag{S5'}\\ 
 (x * y) (z(x*y)) &= (x ((z*y)x))*y \label{auto2}\tag{S6'}
\end{align}

\begin{proposition}\label{caratt Sing}
Let $(X,\cdot,*)$ be a binary algebraic structure. The following are equivalent:
\begin{itemize}
\item[(i)] $(X,\cdot,*)$ is a singquandle.
\item[(ii)] $(X,\cdot)$ is a $\S$-right quasigroup, $x*y=(xy) y$ and $(X,\cdot,*)$ is an oriented singquandle.
\item[(iii)] 
The following identities hold
\begin{align}
(x(yx))y&=x\label{S again}\\
x*y&=(xy)y\label{def of quandle}\\
x*x &= x\label{Idempotency_2}\\
((xy)z)z &= ((xz)z)((yz)z)\label{distributive_2}\\
(x*z)*y &=(x*(yz))*(z/y) \label{ID3_2}
\end{align}
\end{itemize}
\end{proposition}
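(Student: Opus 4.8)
The plan is to prove the two equivalences (ii)$\Leftrightarrow$(iii) and (i)$\Leftrightarrow$(ii) separately, relying on three tools already available: Lemma \ref{PropS0} (which recognizes $\S$-right quasigroups from a single identity), Proposition \ref{Car Oriented} (which characterizes oriented singquandles), and the reformulation \eqref{eq to S}--\eqref{auto2} of Definition \ref{def singquandles}. The organizing remark is that the relation $x*y=(xy)y$ says exactly that $\rho_y=R_y^2$, while the $\S$-identity \eqref{reduced axioms} supplies $z/y=z(yz)$; together these let me rewrite any word in $\cdot$, $*$ and $/$ as any other, and this is precisely what the various identities in the statement encode.

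For (ii)$\Leftrightarrow$(iii) I would set up a dictionary. By Lemma \ref{PropS0}, identity \eqref{S again} is exactly the assertion that $(X,\cdot)$ is a $\S$-right quasigroup, and \eqref{def of quandle} is the defining relation $x*y=(xy)y$. Granting this relation, substituting it turns \eqref{Oriented1} verbatim into \eqref{distributive_2}, while $x*x=x$ is \eqref{Idempotency_2}; moreover the remaining quandle axioms for $*$ come for free, since $\rho_y=R_y^2$ is automatically bijective and, once $\rho_z\in\aut{X,\cdot}$ is known from \eqref{Oriented1}, the computation $\rho_{y*z}=\rho_z\rho_y\rho_z^{-1}$ of Proposition \ref{Car Oriented} gives right distributivity of $*$. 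Thus ``$(X,*)$ is a quandle together with \eqref{Oriented1}'' is equivalent to \eqref{Idempotency_2} and \eqref{distributive_2}. It remains to match \eqref{Oriented2} with \eqref{ID3_2}: writing both in the operator forms $\rho_x\rho_y=\rho_{(y*x)x}\rho_{xy}$ and $\rho_y\rho_z=\rho_{z/y}\rho_{yz}$, the two coincide after relabelling as soon as $(y*x)x=y/x$, that is, as soon as $R_y^4=\mathrm{id}$.

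For (i)$\Leftrightarrow$(ii) I would read off each reformulated axiom. Identity \eqref{eq to S} is exactly \eqref{S01}, so by Lemma \ref{PropS0} it is the $\S$-right quasigroup condition; \eqref{def of *} reads $yx=(y*x)(x(y*x))=(y*x)/x$ by \eqref{reduced axioms}, hence is equivalent to $x*y=(xy)y$; using that $*$ is involutive so that $x\mapsto x*y$ is onto, \eqref{auto} rewrites as $(zw)*y=(z*y)(w*y)$, i.e.\ \eqref{Oriented1}; and \eqref{prop of rho2}, after replacing $x(zx)$ by $x/z$, is \eqref{Oriented2}. The two remaining identities \eqref{last one} and \eqref{auto2} encode the compatibility of $R_2$ with the involutive quandle structure of $*$, and I expect them to become redundant once the previous identities and the involutory law are in force.

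The main obstacle is the involutory law $R_y^4=\mathrm{id}$, equivalently $(z*y)y=z/y$, equivalently that $*$ is an \emph{involutive} quandle. In (i) it is part of the hypothesis, but to close the cycle it must be derived when passing from (ii) or (iii): in the oriented world one knows only that $(X,*)$ is a quandle, whereas singquandles demand involutivity. This single identity is exactly what every index-matching above turns on, since it is what allows $(y*x)x$ to be traded for the division $y/x$. That it does follow is already visible in the affine model of Example \ref{Example of S}(ii): combining the $\S$-conditions $g+fgf=0$ and $fg^2+f^2=1$ with the idempotency relation $f^2+fg+g=1$ for $*$ gives $f^4-1=-(f+1)\,g(f^2+1)=0$, whence $f^4=1$. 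Producing the analogue of this cancellation for a general (non-affine) structure, i.e.\ showing that $\S$-right quasigroup together with $x*y=(xy)y$ and the quandle/\eqref{Oriented2} axioms force $R_y^4=\mathrm{id}$, is the technical heart of the argument.
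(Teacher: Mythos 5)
Your architecture is sound and in fact mirrors the paper's: the same dictionary (\eqref{S again} is the $\S$-condition via Lemma \ref{PropS0}, \eqref{def of quandle} says $\rho_y=R_y^2$, \eqref{Oriented1} becomes \eqref{distributive_2}, and the quandle axioms for $*$ come for free since $\rho_z\in\aut{X,\cdot}$ conjugates $R_y$ to $R_{y*z}$), and your reading of \eqref{def of *} as $yx=(y*x)/x$ is even slightly cleaner than the paper's manipulation. But you have not proved the proposition: you correctly isolate the involutory law $R_y^4=\mathrm{id}$ as the point on which every index-matching turns, and then you leave it unproven --- an affine verification via \eqref{affine S} is evidence, not a proof, and ``I expect them to become redundant'' for \eqref{last one} and \eqref{auto2} is likewise a placeholder. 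The paper closes exactly this gap by a short equational chain inside (iii) $\Rightarrow$ (i): from \eqref{reduced axioms} and \eqref{def of quandle} one gets $(x/y)(yx)=x*(yx)$; from \eqref{reduced axioms}, \eqref{Idempotency_2} and \eqref{distributive_2} one gets $x/(yx)=(xy)*x$; combining these with $(yx)(x/y)=y$ (an instance of \eqref{S01}) yields $\bigl(x*(yx)\bigr)*(x/y)=(x/y)/y$; and finally \eqref{ID3_2} specialized at $z=x$ gives $x*y=(x*x)*y=\bigl(x*(yx)\bigr)*(x/y)=(x/y)/y$, i.e.\ $\rho_y=R_y^{2}=R_y^{-2}$. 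The remaining items you defer are then genuinely routine (the paper checks \eqref{last one} directly via $(x(y*x))*\bigl((y*x)(x(y*x))\bigr)=(x/(yx))*(yx)=x(yx)$, and \eqref{auto2} by the same automorphism argument as \eqref{auto}), but they do need to be written down.

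A further structural caveat: your plan proposes to extract $R_y^4=\mathrm{id}$ from (ii), i.e.\ from \eqref{Oriented2}, whose critical term is $(y*x)x=R_x^{3}(y)$ rather than the division $y/x$. The specialization that works above then produces $\bigl(x*(yx)\bigr)*\bigl((x*y)y\bigr)$ instead of $\bigl(x*(yx)\bigr)*(x/y)$, and the chain no longer applies; modulo \eqref{prop of rho}, the identity \eqref{ID3_2} amounts precisely to $\rho_{R_y^{4}(w)}=\rho_w$, so deriving involutivity from (ii) alone is, a priori, at least as hard as what you are trying to prove. This is presumably why the paper proves the cycle (i) $\Rightarrow$ (ii) $\Rightarrow$ (iii) $\Rightarrow$ (i) and places the involutivity computation in (iii) $\Rightarrow$ (i), where $/$ is already available in \eqref{ID3_2}, rather than splitting the statement into the two independent equivalences (i) $\Leftrightarrow$ (ii) and (ii) $\Leftrightarrow$ (iii) as you do. If you keep your decomposition, you must either reproduce the paper's chain inside (iii) $\Rightarrow$ (ii) and separately find an argument for (ii) $\Rightarrow$ (iii), or restructure into the cycle.
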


\begin{proof}
Let us first point out some observations:
\begin{itemize}
\item According to Proposition \ref{PropS0}, the identity \eqref{eq to S} is equivalent to have that $(X,\cdot)$ is a $\S$-right quasigroup. 


\item The identity \eqref{prop of rho2} is equivalent to
\begin{equation}\label{rho}
\rho_{zx}\rho_{x(zx)}=\rho_x\rho_z.
\end{equation}
\end{itemize}

(i) $\Rightarrow$ (ii) Using $\rho_x^2=1$ and replacing $x$ by $x*y$ in \eqref{auto} we have 
$$(z((x * y)*y)) * y =(zx)*y= ( z * y)(x*y) $$
namely $\rho_x\in \aut{X,\cdot}$. In particular, $R_x$ and $\rho_x$ commute.

Using that $\rho_x\in \aut{X,\cdot}$ in \eqref{def of *} we have
$$yx = (y * x)(x(y*x))= (y * x)((x*x)(y*x))=(y(xy))*x=(y/x)*x.$$
Thus, $R_x=\rho_x R_x^{-1}$, i.e. $\rho_x=\rho_x^{-1}=R_x^{2}$ and so $x*y=(xy)y$. 

Using that $(x*z)z=x/z=x(zx)$ in \eqref{rho} we have that \eqref{prop of rho} follows.

Therefore $(X,\cdot,*)$ is an oriented singquandle according to Proposition \ref{Car Oriented}.

(ii) $\Rightarrow$ (iii) The mapping $\rho_x^{2}\in \aut{X,\cdot}$ and so \eqref{distributive_2} holds. $(X,*)$ is a quandle and so $x*x=(xx)x=x$. 
Finally \eqref{ID3_2} follows by \eqref{prop of rho} just by replacing the definition of $*$.

(iii) $\Rightarrow$ (i) Let us first show that $(X,*)$ is an involutory quandle. Since $\rho_x\in \aut{X,\cdot}$ then $\rho_x\in \aut{X,*}$ and $(X,*)$ is idempotent by \eqref{Idempotency_2}. Thus $(X,*)$ is a quandle. 


Note that 
\begin{align}
    (x/y)(yx)&\overset{\eqref{reduced axioms}}{=}(x(yx))(yx)=x*(yx)\label{A},\\
x/(yx)&\overset{\eqref{reduced axioms}}{=}x((yx)x)=x(y*x)\overset{\eqref{Idempotency_2}}{=}(x*x)(y*x)\overset{\eqref{distributive_2}}{=}(xy)*x\label{B}.
\end{align}
Therefore we have
\begin{align}
    (x*(yx))*(x/y)\overset{\eqref{A}}{=} ((x/y)(yx))*(x/y)\overset{\eqref{B}}{=}(x/y)/((yx)(x/y)\overset{\eqref{reduced axioms}}{=} (x/y)/y.\label{C}
\end{align}
Finally, we have
\begin{align*}
    x*y\overset{\eqref{Idempotency_2}}{=}(x*x)*y\overset{\eqref{ID3_2}}{=}(x*(yx))*(x/y)\overset{\eqref{C}}{=}(x/y)/y.
\end{align*}
Therefore $\rho_y=R_y^2=R_y^{-2}=\rho_y^{-1}$, i.e. $(X,*)$ is involutory.

Under this assumption \eqref{auto} is equivalent to $\rho_y\in \aut{X,\cdot}$ and \eqref{auto2} follows from the same argument. Also \eqref{def of *} follows as in the first part of the proof, by using that $\rho_x$ is an involutory automorphism of $(X,\cdot)$. The identity \eqref{prop of rho2} is equivalent to \eqref{distributive_2} modulo the identity \eqref{def of quandle}.

Let us check the identity \eqref{last one}. Indeed, using that $y*x=(yx)x$ and that $x/y=x(yx)$ we have
\begin{align*}
(x(y * x)) * ((y * x)( x(y*x))&=(x((y x)x)) * ((y * x)/ x)\\
&=(x/(yx)) * (yx)=x(yx).
%
\end{align*}

\end{proof}

Singquandles are $2$-divisible.

\begin{corollary}\label{sing are 2 div}
Singquandles are $2$-divisible and the squaring mapping is an involution.
\end{corollary}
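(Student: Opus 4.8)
The goal is to show that singquandles are $2$-divisible, i.e.\ that the squaring map $\sigma(x)=x\cdot x$ is a bijection, and moreover that $\sigma$ is an involution. By Proposition~\ref{caratt Sing} I may work with the characterization in item (iii), where $(X,\cdot)$ is a $\S$-right quasigroup, $x*y=(xy)y$, and $(X,*)$ is shown in the proof to be an \emph{involutory} quandle. The key algebraic fact extracted from that proof is the relation $\rho_y=R_y^2$ together with $\rho_y=\rho_y^{-1}$, which says $R_y^2=R_y^{-2}$, i.e.\ $R_y^4=\mathrm{id}$ for every $y$. The plan is to exploit precisely this relation to pin down $\sigma$ explicitly in terms of right multiplications.

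First I would compute $\sigma$ directly: $\sigma(x)=x\cdot x=R_x(x)$. The natural move is to relate $\sigma$ to the involutory structure on $*$. Since $x*y=(xy)y=R_y^2(x)=\rho_y(x)$ and $(X,*)$ is involutory, we have $\rho_y^2=\mathrm{id}$, hence $R_y^4=\mathrm{id}$ for all $y$. In particular each $R_y$ is a permutation of finite order dividing $4$. I would then show $\sigma$ coincides with a composite of such permutations, or argue more cleanly as follows: the second claim, that $\sigma$ is an involution, is the stronger assertion and implies $2$-divisibility since any involution is a bijection. So the efficient route is to prove $\sigma^2=\mathrm{id}$ outright, and deduce $2$-divisibility as an immediate consequence.

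To prove $\sigma^2=\mathrm{id}$, I would compute $\sigma(\sigma(x))=(x\cdot x)\cdot(x\cdot x)$ and reduce it using the $\S$-right quasigroup identities and the involutory property. A promising identity to invoke is \eqref{Idempotency_2}, $x*x=x$, which in $\cdot$-terms reads $(xx)x=x$, i.e.\ $\sigma(x)\cdot x=x$, equivalently $R_x(\sigma(x))=x$ and so $R_x^2(\sigma(x)) = R_x(x) = \sigma(x)$. Combined with the relation $x*y=(xy)y$ evaluated at $y=\sigma(x)$ or $y=x$, this should let me express $\sigma(\sigma(x))$ and collapse it to $x$. Concretely, from $(xx)x=x$ I get that $\sigma(x)$ is the unique preimage $x/x$ of $x$ under $R_x$, namely $\sigma(x)=x/x$; applying $\sigma$ again and using the defining identity \eqref{reduced axioms} of $\S$-right quasigroups, $x/y=x(yx)$, should close the computation.

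The main obstacle I anticipate is keeping the right/left-division bookkeeping straight: the identities come in the asymmetric pair \eqref{reduced axioms} and \eqref{reduced axioms2}, and $\sigma$ interacts with both $R_x$ and the division, so the temptation to substitute in the wrong slot is real. I expect the cleanest presentation to avoid raw element-chasing and instead phrase everything through the multiplication maps: establish $R_y^4=\mathrm{id}$ from involutivity of $*$, observe $\sigma(x)=x/x$ from $(xx)x=x$, and then verify $\sigma^{-1}=\sigma$ by checking $\sigma(x/x)=x$ using \eqref{reduced axioms}. Once $\sigma$ is an involution, $2$-divisibility is automatic, completing both assertions of the corollary.
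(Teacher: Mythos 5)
Your proposal is correct and follows essentially the same route as the paper: reduce both claims to $\sigma^2=\mathrm{id}$, deduce $\sigma(x)=x/x$ from $(xx)x=x$ (i.e.\ \eqref{Idempotency_2}), and close with the $\S$-identity \eqref{reduced axioms}, which is exactly the paper's computation $(xx)(xx)=(xx)(x/x)=x$. The preliminary detour through $\rho_y=R_y^2$ and $R_y^4=\mathrm{id}$ is true but unnecessary; as the paper remarks after the corollary, only \eqref{reduced axioms} and \eqref{Idempotency_2} are actually used.
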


\begin{proof}

Let $(X,\cdot)$ be a singquandle and $x\in X$. It is enough to prove that $\sigma^2(x)=(xx)(xx)=x$. According to \eqref{Idempotency_2} we have $xx=x/x$. Hence by \eqref{reduced axioms} it follows that
\begin{equation*}
(xx)(xx)\overset{\eqref{Idempotency_2}}{=}(xx)(x/x)\overset{\eqref{reduced axioms}}{=}x.
\end{equation*}

\end{proof}
Note that the proof of Corollary \ref{sing are 2 div} actually uses just \eqref{reduced axioms} and \eqref{Idempotency_2}, so also $\S$-right quasigroups such that $(xx)x=x$ holds are $2$-divisible.

The set of axioms given in Proposition \ref{caratt Sing}(iii) is independent. We can consider singquandles as binary structure with one binary operation using \eqref{def of quandle} as the definition of $*$ and rewrite all the axioms accordingly as:
\begin{align}
(x(yx))y&=x,\label{S again3}\\
(xx)x &= x,\label{Idempotency_3}\\
((xy)z)z &= ((xz)z)((yz)z),\label{distributive_3}\\
(((xz)z)y)y &=(((x(yz))(yz))(z/y))(z/y). \label{ID3_3}
\end{align}

Let us show that the axioms above are independent by examples generated by the software Mace4:
\begin{itemize}
\item Involutory $\S$-right quasigroups are singquandles. Indeed, if $(X,\cdot)$ is involutory then $(X,*)$ is projection and so \eqref{Idempotency_2}, \eqref{distributive_2} and \eqref{ID3_2} are trivially satisfied.
The involutory right quasigroup
$$(X,\cdot)=\begin{tabular}{|c c |}
\hline
2&1\\
1&2\\
\hline
\end{tabular}\,,
$$
does not satisfy \eqref{S again3}.

\item The right quasigroup 
$$(X,\cdot)=\begin{tabular}{|c c c c|}
\hline
2&3&3&2\\
4&1&1&4\\
1&4&4&1\\
3&2&2&3\\
\hline
\end{tabular}\,,
$$
satisfies all the axioms but \eqref{Idempotency_3}.

\item The right quasigroup
$$(X,\cdot)=\begin{tabular}{|c c c c c c|}
\hline
1&4&6&1&1&1\\
3&2&2&3&3&3\\
2&3&3&2&2&2\\
4&1&5&5&5&4\\
5&6&4&4&4&5\\
6&5&1&6&6&6\\
\hline
\end{tabular}\,,
$$
 satisfies all the axioms but \eqref{distributive_3}.

\item The right quasigroup 
$$(X,\cdot)=\begin{tabular}{|c c c c c|}
\hline
1&4&5&1&1\\
3&2&2&3&3\\
2&3&3&2&2\\
4&5&1&4&4\\
5&1&4&5&5\\
\hline
\end{tabular}\,,
$$
satisfies all the axioms but \eqref{ID3_3}.

\end{itemize}

\begin{corollary}
Let $(X,\cdot,/)$ be a singquandle. Then $(X,/,\cdot)$ is a singquandle.
\end{corollary}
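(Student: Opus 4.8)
The plan is to read the statement ``$(X,/,\cdot)$ is a singquandle'' through Proposition \ref{caratt Sing}(ii), applied with the operation $/$ playing the role of $\cdot$. Writing $x\star y=(x/y)/y$ for the operation derived from $/$, I must check three things: that $(X,/)$ is an $\S$-right quasigroup, that $\star$ coincides with the operation $*$ of the given singquandle, and that $(X,/,\star)$ is an oriented singquandle. The first two are immediate: by Proposition \ref{S and S'}, $(X,/,\cdot)$ is an $\S$-right quasigroup exactly because $(X,\cdot,/)$ is one, and the proof of Proposition \ref{caratt Sing} gives $x*y=(xy)y=(x/y)/y$, so $\star=*$. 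In particular $(X,*)$ is the same involutory quandle for both operations, so the quandle hypothesis of Proposition \ref{Car Oriented} is already in place.

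Everything therefore reduces to showing that $(X,/,*)$ is an oriented singquandle, for which I would invoke Proposition \ref{Car Oriented}(iii). Since $\rho_x\in\aut{X,\cdot}$ and $/$ is definable from $\cdot$, every $\rho_x$ is also an automorphism of $(X,/)$, which is exactly \eqref{Oriented1} for $/$, namely $(x/y)*z=(x*z)/(y*z)$. The substantive point is \eqref{Oriented2} for $/$,
\begin{equation*}
(z*y)*x=(z*(x/y))*((y*x)/x).
\end{equation*}
Using the right-division law $(ab)/b=a$ together with $y*x=(yx)x$ one gets $(y*x)/x=yx$, so the target simplifies to $(z*y)*x=(z*(x/y))*(yx)$.

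The heart of the argument is a change of variables turning this division-heavy identity back into a single instance of \eqref{ID3_2}. Setting $w=x/y$, so that $x=wy$ and, by \eqref{reduced axioms}, $yx=y(wy)=y/w$, the claim becomes
\begin{equation*}
(z*y)*(wy)=(z*w)*(y/w).
\end{equation*}
I then apply \eqref{ID3_2} to the right-hand side with its three variables specialized to $z$, $w$, $y/w$; using $(y/w)w=y$ and $w/(y/w)=wy$ (the latter from \eqref{reduced axioms2}), the right-hand side is rewritten as $(z*y)*(wy)$, i.e.\ the left-hand side. Hence \eqref{Oriented2} holds for $/$, $(X,/,*)$ is an oriented singquandle, and Proposition \ref{caratt Sing}(ii) yields that $(X,/,\cdot)$ is a singquandle.

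I expect this last identity to be the main obstacle. The other three defining identities of Proposition \ref{caratt Sing}(iii) dualize for free: the $\S$-identity and idempotency are self-dual (Proposition \ref{S and S'}, and $x/x=xx$), while the distributive axiom only records that $\rho_z$ is a common automorphism of $\cdot$ and $/$. For the mixed axiom \eqref{ID3_2} no such shortcut is available, and the real difficulty is to locate the substitution $w=x/y$ which, through the defining identity \eqref{reduced axioms}, collapses the dual statement onto a single application of the original axiom.
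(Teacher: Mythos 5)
Your proof is correct, but it takes a genuinely different route from the paper's. The paper stays entirely inside the one-operation axiomatization \eqref{S again3}--\eqref{ID3_3}: it recasts \eqref{distributive_3} and \eqref{ID3_3} as operator identities in the right translations, namely $R_{R_z^2(y)}R_z^{2}=R_z^2R_y$ and $R_y^2R_z^2=R_{z/y}^2R_{yz}^2$, and then dualizes all four axioms at one stroke by taking inverses, using $R_z^4=1$ (the translations of $(X,/,\cdot)$ being the $R_z^{-1}$). You instead route the statement through the two-operation characterizations, Proposition \ref{caratt Sing}(ii) combined with Proposition \ref{Car Oriented}(iii): the $\S$-property dualizes by Proposition \ref{S and S'}, the derived quandle operation is self-dual since $(x/y)/y=R_y^{-2}(x)=R_y^{2}(x)=(xy)y$, the automorphism axiom \eqref{Oriented1} for $/$ is free because automorphisms of $(X,\cdot)$ automatically preserve the division, and the only substantive point, \eqref{Oriented2} for $/$, you settle by the substitution $w=x/y$, under which $(y*x)/x=yx=y/w$ and $w/(y/w)=wy$ collapse the dual identity onto a single instance of \eqref{ID3_2}. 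I verified the substitution: it uses only the right-division laws together with \eqref{reduced axioms} and \eqref{reduced axioms2}, all of which are available at that point, so the argument is sound. What each approach buys: the paper's operator-inversion argument is uniform and mechanical, with $R_z^4=1$ doing all the work and every axiom dualizing simultaneously; your argument isolates exactly where the content lies (the mixed axiom) and shows that its dual is not a new identity but the original one read at transformed arguments, which is arguably more informative. At bottom, your change of variables is the elementwise shadow of the paper's inversion-plus-renaming of the operator identity $R_y^2R_z^2=R_{z/y}^2R_{yz}^2$.
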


\begin{proof}
According to Proposition \ref{S and S'}, $(X,/,\cdot)$ is a $\S$-right quasigroup and so \eqref{S again3} hold for $(X,/,\cdot)$. Note that we can write \eqref{distributive_3} and \eqref{ID3_3} in terms of the right multiplication mappings as
\begin{align*}
    \eqref{distributive_3}\,&\Leftrightarrow\, R_{R_z^2(y)}R_z^{2} =R_z^2 R_y, \\
\eqref{ID3_3}\,&\Leftrightarrow\,  R_y^2 R_z^2=R_{z/y}^2 R_{yz}^2.
\end{align*}
Thus, using that $R_z^4=1$, we have
\begin{align*}
 (xx)x=x\, & \Leftrightarrow \, (x/x)/x=x,\\
 R_{R_z^2(y)}R_z^{2} =R_z^2 R_y \, & \Leftrightarrow \, R_{R_z^{-2}(y)}^{-1}R_z^{-2} =R_z^{-2} R_y^{-1}\\ 
R_y^2 R_z^2=R_{z/y}^2 R_{yz}^2\,& \Leftrightarrow\,R_z^{-2} R_y^{-2}=R_{yz}^{-2} R_{z/y}^{-2},
\end{align*}
namely all the axioms of singquandles hold for $(X,/,\cdot)$.
\end{proof}

Finally, note that colorings of non-oriented singular links by singquandles are obtained as in Figure \ref{sing coloring}.

\begin{figure}[ht]
\begin{center}
\includegraphics[width=10cm]{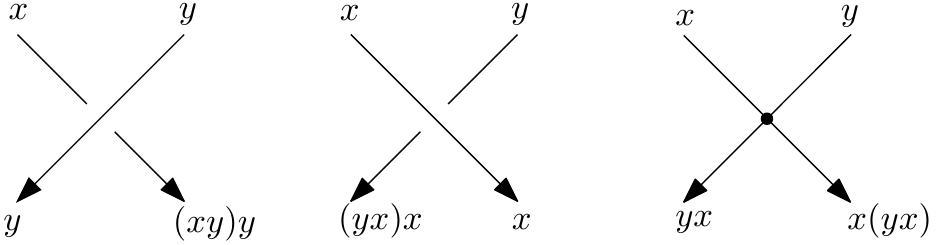}
\caption{}
\label{sing coloring}
\end{center}
\end{figure}

Note that, since singquandles are right quasigroups, Remark \ref{remark on coloring} holds also for such colorings.

	\subsection{Affine singquandles}
Let us turn our attention to the family of affine singquandles.
\begin{proposition}\label{affine Singquandles}
Let $(X,\cdot)=\aff{(A,f,g,c)}$ be an affine right quasigroup. The following are equivalent:
\begin{itemize}
\item[(i)] $(X,\cdot)$ is an singquandle.

\item[(ii)] The identities \eqref{reduced axioms}, \eqref{Idempotency_3} and $(((xy)y)y)y=x$ hold. 

\item[(iii)] The following identities hold:
\begin{align}
fg^2+f^2-1  &=0, \label{aff_1}\tag{A1}\\
g+fgf&=0,\label{aff_2}\tag{A2}\\
1-f^4&=0\label{aff_3}\tag{A3}\\
(1+f)g&=1-f^2\label{aff_4}\tag{A4}\\
(1+f)(c)&=g(c)=0\label{aff_5}\tag{A5}.
\end{align}

\end{itemize}


\end{proposition}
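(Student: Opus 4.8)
The plan is to characterize when an affine right quasigroup $\aff{(A,f,g,c)}$ is a singquandle by translating the identities from Proposition \ref{caratt Sing}(iii) — or equivalently the one-operation version \eqref{S again3}, \eqref{Idempotency_3}, \eqref{distributive_3}, \eqref{ID3_3} — into polynomial relations on the endomorphisms $f,g$ and the constant $c$. The key computational device is that in an affine structure every term built from $\cdot$ evaluates to an $A$-linear combination of the variables plus a constant, so each universally quantified identity becomes a pair of conditions: equality of the linear coefficients (giving polynomial relations in $f,g$ in the endomorphism ring) and equality of the constant parts (giving relations involving $c$). I would compute $x\cdot y = f(x)+g(y)+c$, then the right division $x/y = R_y^{-1}(x) = f^{-1}(x) - f^{-1}g(y) - f^{-1}(c)$, and substitute repeatedly.

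\medskip

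First I would establish the equivalence of (i) and (ii). Recall from Example \ref{Example of S}(ii) that in the affine setting the $\S$-identity \eqref{reduced axioms} is equivalent to $fg^2+f^2-1=g+fgf=(f+fg+1)(c)=0$. Then I would show that, granted \eqref{reduced axioms}, the remaining singquandle axioms collapse to just \eqref{Idempotency_3}, i.e. $(xx)x=x$, together with the order-four condition $(((xy)y)y)y=x$ coming from involutivity of the induced quandle $*$. The point here is conceptual: by Corollary \ref{sing are 2 div} a singquandle forces $(X,*)$ involutory, which in the affine language is exactly $R_y^4=1$; and the distributivity/compatibility axioms \eqref{distributive_3}, \eqref{ID3_3}, once the linear parts are extracted, reduce to consequences of the $\S$-relations plus these two conditions. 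So (ii) is a minimal generating set of the axiom list, and I would verify each direction by plugging the affine forms into \eqref{S again3}, \eqref{Idempotency_3}, \eqref{distributive_3}, \eqref{ID3_3}.

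\medskip

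Next I would prove the equivalence of (ii) and (iii) by direct linear computation. Expanding $(xx)x = x$ with $x\cdot x = f(x)+g(x)+c$ gives the linear condition $(f+g)^2 \cdots$ contributing $f^2 + fg + gf + g^2$-type terms equated to the identity; combined with the $\S$-relations \eqref{aff_1} $fg^2+f^2-1=0$ and \eqref{aff_2} $g+fgf=0$, this should yield \eqref{aff_4}, namely $(1+f)g = 1-f^2$. Similarly the order-four identity $(((xy)y)y)y=x$ evaluated on its linear part in $x$ forces $f^4=1$, which is \eqref{aff_3}. The constant parts of all these identities, together with the constant part $(f+fg+1)(c)=0$ inherited from \eqref{reduced axioms}, must then be shown equivalent to $(1+f)(c)=g(c)=0$, which is \eqref{aff_5}. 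I expect the main obstacle to be this constant-part bookkeeping: one must check that the several constant conditions arising from the different axioms are jointly equivalent to the clean pair in \eqref{aff_5}, using the endomorphism relations \eqref{aff_1}--\eqref{aff_4} to eliminate redundancies (for instance, deriving $(f+fg+1)(c)=0$ from $(1+f)(c)=g(c)=0$ and conversely). The linear-part identities are routine manipulations in the (generally noncommutative) subring of $\End{A,+}$ generated by $f$ and $g$, but care is needed never to assume commutativity beyond what \eqref{aff_1}--\eqref{aff_3} supply.
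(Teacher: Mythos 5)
Your proposal is correct and follows essentially the same route as the paper's proof: (i)$\Rightarrow$(ii) via Proposition \ref{caratt Sing} and involutivity of the induced quandle $*$, then linearization of each identity into endomorphism-ring relations plus constant conditions (using Example \ref{Example of S}(ii) for \eqref{reduced axioms}), with the same constant bookkeeping reducing to \eqref{aff_5} and the same noncommutative verification of \eqref{distributive_3} and \eqref{ID3_3} for (iii)$\Rightarrow$(i). Two minor corrections: $(xx)x$ linearizes to $f^2+fg+g$ (no $gf$ or $g^2$ cross terms arise), so \eqref{aff_4} follows from \eqref{Idempotency_3} alone without invoking \eqref{aff_1}--\eqref{aff_2}; and the involutivity of $*$ is part of Proposition \ref{caratt Sing} (or the singquandle definition), not of Corollary \ref{sing are 2 div}, which concerns the squaring map.
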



\begin{proof}
(i) $\Rightarrow$ (ii) According to Proposition \ref{caratt Sing}, \eqref{reduced axioms} and \eqref{distributive_3} hold and $(X,*)$ where $x*y=(x y) y$ is an involutory quandle. Then $(x*y)*y=(((xy)y)y)y=x$ holds.

(ii) $\Rightarrow$ (iii) We compute the conditions on $f,g$ and $c$ that need to be satisfied in order to have that \eqref{reduced axioms}, \eqref{Idempotency_3} and $(((	xy)y)y)y=x$ hold. 

\begin{itemize}
\item The identity \eqref{reduced axioms} holds if and only if the identities \eqref{affine S} hold, i.e. \eqref{aff_1}, \eqref{aff_2} and $(1+fg+f)(c)=0$. hold.

\item The identity \eqref{Idempotency_3} holds if and only if
\begin{align*}
(xx)x&=c+g(x)+f(c)+fg(x)+f^2(x)=x,
\end{align*}
namely
\begin{align}
(1+f)(c)=0,\, (1+f)g=1-f^2. \label{from ID}
\end{align}
Using the first equation of \eqref{from ID} and that $(1+fg+f)(c)=0$ we have that $(1+f)(c)=g(c)=0$. Therefore, \eqref{aff_4} and \eqref{aff_5} hold.

\item We have that $((xy)y)y)y=x 
$ if and only if 
\begin{align*}
(((x y) y) y) y &=(1+f^2)(1+f)(c)+(1+f^2)(1+f)g(y)+f^4(x)\\
&\overset{\eqref{aff_5}}{=}(1+f^2)(1+f)g(y)+f^4(x)\\
&\overset{\eqref{aff_4}}{=}(1+f^2)(1-f^2)(y)+f^4(x)\\
&=(1-f^4)(y)+f^4(x)=x
\end{align*} 
i.e. \eqref{aff_3} holds.
\end{itemize}

(iii) $\Rightarrow$ (i) Let us prove that such conditions are sufficient for the other axioms of $S$-quandles. We have already showed that \eqref{reduced axioms} and \eqref{Idempotency_3} are equivalent to the equations \eqref{aff_1}, \eqref{aff_2}, \eqref{aff_3}, \eqref{aff_4} and \eqref{aff_5}.
Let us check the other identities.

\begin{itemize}
\item  Since 
\begin{align*}
((xy)z)z &=f^2(c)+(g+fg)(z)+f^2g(y)+f^3(x)\\
((xz)z)((yz)z)&=f^2(c)+(g^2+gfg+fg+f^2g)(z)+(gf^2)(y)+f^3(x),
\end{align*}
the identity \eqref{distributive_3} holds if and only if 
\begin{align}
g&=g^2+gfg+f^2g,\label{aff4}\\
f^2g&=gf^2 \label{aff5}
\end{align}
Since $f^4=1$ then by \eqref{aff_2} we have
$$f^2g=-fgf^{-1}=gf^{-2}=gf^2.$$
Moreover
\begin{align*}
g^2+gfg+f^2g-g&\overset{\eqref{aff5}}{=}g^2+gfg+gf^2-g=g(g+fg+f^2-1) \overset{\eqref{aff_4}}{=}0.
\end{align*}

\item 

It is easy to compute that
$$(((xz)z)y)y=(((xz)z)y)y=(1-f^2)(y-z)+x.$$
So we have 
\begin{align*}
(((x(yz))(yz))(z/y))(z/y)&=(1-f^2)(z/y-yz)+x\\
&=(1-f^2)(f^{-1}(z-g(y)-c)-c-g(z)-f(y))+x\\
&\overset{\eqref{aff_5}}{=}(1-f^2)((f^{-1}-g)(z)-(f^{-1}g+f)(y))+x.
\end{align*}
Thus, the identity \eqref{ID3_3} holds if and only if 
\begin{align*}
(1-f)(1+f)(1+f^{-1}g+f)&=0\\
(1-f)(1+f)(f^{-1}-g+1)&=0
\end{align*}
Using that $(1+f)g=1-f^2$ and that $1-f^4=0$ we have
\begin{align*}
(1-f)(1+f)(1+f^{-1}g+f)&=(1-f)(1+f+f^{3}(1-f^2)+f+f^2)\\
&=(1-f)(1+f+f^2+f^3)=1-f^4=0\\
(1-f)(1+f)(f^{-1}-g+1)&=(1-f)(f^3+1-(1-f^2)+1+f)\\
&=(1-f)(1+f+f^2+f^3)=1-f^4=0.
\end{align*}
Thus the identity \eqref{ID3_2} holds.
\end{itemize}
\end{proof}
The construction of Alexander singquandles given in \cite[Proposition 4.3]{SingInvol} defined as a binary algebraic structure over an abelian group $A$ using $t,B\in \aut{A,+}$ by setting 
$$x * y = tx + (1 - t)y,\qquad R_1(x,y) = (1+ t - B)x + (t + B)y,\qquad R_2(x,y) = (1 -B)x + By,$$
provides exactly affine idempotent Sinquandles (the relation between $f, g$ and $B$ is $g=B(1-B)$ and $f=B^2-B+1$ and the pair $B$ and $t$ satisfy $1-(1-B)^4=B(1+(1-B)^2)=(1-B)^2-t=0$).

It is easy to check that the affine right quasigroup $\aff(A,f,g,c)$ is a idempotent if and only if
\begin{align}\label{idempotent}
g =1-f,\quad  c=0.
\end{align}
Note that idempotent affine right quasigroup are quandles and so \eqref{distributive_2} holds and according to Example \ref{Example of S}(iii), the identity \eqref{reduced axioms} is equivalent to the identity $((xy)x)y=x$. Thus, under the assumptions \eqref{idempotent} and using that $1-f^4=(1+f)(1-f)(1+f^2)$ the identities in Proposition \ref{affine Singquandles}(iii) reduce to

$$(1-f)(1+f^2)=0.$$
So, we have the following result.

\begin{corollary}\label{idempotent affine}
Let $X=\aff(A,f,g,c)$ be an affine right quasigroup. The following are equivalent:
\begin{itemize}
\item[(i)] $X$ is an idempotent Sinquandle.
\item[(ii)] The identities $xx=x$ and $((xy)x)y=x$ hold.
\item[(iii)] $g=1-f$, $(1-f)(1+f^2)=0$ and $c=0$.
\end{itemize}
\end{corollary}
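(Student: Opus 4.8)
The plan is to prove the two equivalences (i) $\Leftrightarrow$ (iii) and (ii) $\Leftrightarrow$ (iii) separately, rather than attempting (i) $\Leftrightarrow$ (ii) directly (which would force me to verify the awkward axiom \eqref{ID3_2} by hand). The common lever is the identity \eqref{idempotent}: an affine right quasigroup $\aff{(A,f,g,c)}$ is idempotent exactly when $g = 1-f$ and $c = 0$, and idempotency is precisely the identity $xx = x$. So in both equivalences I would first impose $g = 1-f$ and $c = 0$, after which $(X,\cdot)$ is an affine \emph{quandle} and all the quandle-specific tools of the earlier sections (notably Example \ref{Example of S}(iii)) become available.

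For (i) $\Leftrightarrow$ (iii) I would start from Proposition \ref{affine Singquandles}, which characterizes affine singquandles by the system \eqref{aff_1}--\eqref{aff_5}, and substitute $g = 1-f$, $c = 0$ into each equation, simplifying in the commutative ring of polynomials in $f$ (using that $g = 1-f$ commutes with every power of $f$). I expect \eqref{aff_5} to become vacuous since $c = 0$, and \eqref{aff_4} to become the tautology $(1+f)(1-f) = 1-f^2$. A short substitution should turn both \eqref{aff_1} and \eqref{aff_2} into the single relation $(1-f)(1+f^2) = 0$, and \eqref{aff_3}, namely $1 - f^4 = 0$, should then be redundant because of the factorization $1 - f^4 = (1+f)(1-f)(1+f^2)$ already recorded before the statement. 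Thus the whole system collapses to $g = 1-f$, $c = 0$, $(1-f)(1+f^2) = 0$, which is exactly (iii).

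For (ii) $\Leftrightarrow$ (iii) I would read $xx = x$ as idempotency, hence equivalent by \eqref{idempotent} to $g = 1-f$ and $c = 0$; assuming these, $(X,\cdot)$ is the affine quandle $x\cdot y = f(x) + (1-f)(y)$. By Example \ref{Example of S}(iii) the remaining identity $((xy)x)y = x$ is precisely the statement that this quandle satisfies \eqref{reduced axioms}, and a direct computation of $((xy)x)y$ should show that both the $x$-coefficient and the $y$-coefficient yield the same relation $(1-f)(1+f^2) = 0$, again giving (iii). The computations here are all routine linear algebra over $\mathbb{Z}[f]$; the only point needing care — and the main (if modest) obstacle — is the bookkeeping that confirms the several a priori distinct constraints coming from \eqref{aff_1}, \eqref{aff_2}, and from the two coefficients in $((xy)x)y$ genuinely coincide, and that \eqref{aff_3} is truly subsumed. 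Once this is verified, the two chains (i) $\Leftrightarrow$ (iii) and (ii) $\Leftrightarrow$ (iii) close the argument.
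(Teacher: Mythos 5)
Your proposal is correct and takes essentially the same route as the paper: the paper also imposes the idempotency conditions $g=1-f$, $c=0$ from \eqref{idempotent} first, invokes Example \ref{Example of S}(iii) to identify $((xy)x)y=x$ with \eqref{reduced axioms} for the resulting affine quandle, and collapses the system of Proposition \ref{affine Singquandles}(iii) to the single relation $(1-f)(1+f^2)=0$ using the factorization $1-f^4=(1+f)(1-f)(1+f^2)$. Your verifications (both \eqref{aff_1} and \eqref{aff_2} reducing to $(1-f)(1+f^2)=0$, \eqref{aff_4} becoming a tautology, \eqref{aff_5} vacuous, and both coefficients of $((xy)x)y=x$ yielding the same relation) all check out, since $g=1-f$ commutes with $f$.
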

According to Corollary \ref{idempotent affine}, affine idempotent Singquanddles are endowed with a module structure over the ring $R=\mathbb{Z}[t,t^{-1}]/((1-t)(1+t^2))$. Conversely, every module $M$ over $R$ is an idempotent affine singquandle with the operation
$$x\cdot y=(1-t)x+ty$$
for $x,y\in M$. In particular, given an affine quandle $Q=\aff(A,1-f,f,0)$ we can consider the right quasigroup $Q'=\aff(A/\left((1-f)(1+f^2)A\right),1-f',f',0)$ where $f'$ is the automorphism induced by $f$ on the quotient group $A/\left((1-f)(1+f^2)A\right)$. Then $Q'$ is a singquandle.

\section*{Acknowledgements}
The first author would like to thank M. Kynion for his help with Prover9. The second author  has been supported by the ``National Group for Algebraic and Geometric Structures, and their Applications" (GNSAGA-INdAM) and University of Bologna, funds for selected research topics.

\bibliographystyle{amsalpha}
\bibliography{references} 

\end{document}